\newtheorem{thm}{Theorem}[section]
\newtheorem{cor}[thm]{Corollary}
\newtheorem{lem}[thm]{Lemma}
\theoremstyle{definition}
\newtheorem{step}{Step}
\newtheorem{dfn}[thm]{Definition}
\newtheorem{eg}[thm]{Example}
\theoremstyle{remark}
\makeatletter \@addtoreset{equation}{section} \makeatother
\def\={\;=\;}
\def\<{\;<\;}
\def\>{\;>\;}
\def\bg#1{\bigl({#1}\bigr)}
\def\Bg#1{\Big({#1}\Bigr)}
\def\bgg#1{\biggl({#1}\biggr)}
\def\C{\mathbb{C}}
\def\N{\mathbb{N}}
\def\Q{\mathbb{Q}}
\def\R{\mathbb{R}}
\def\Z{\mathbb{Z}}
\def\eqrl{\quad\iff\quad}
\def\rmand{\quad\hbox{ and }\quad}
\newcommand{\prl}{\mathrel{\!/\mkern-5mu/\!}}
\DeclareMathOperator\Arg{\mathrm{Arg}}
\crefname{case}{Case}{Cases}
\crefname{case}{Case}{Cases}
\crefname{step}{Step}{Steps}
\crefname{step}{Step}{Steps}
\crefname{caseclm}{Case}{Cases}
\crefname{caseclm}{Case}{Cases}
\crefname{casethm}{Case}{Cases}
\crefname{casethm}{Case}{Cases}
\crefname{clm}{Claim}{Claims}
\Crefname{clm}{Claim}{Claims}
\crefname{cond}{Condition}{Conditions}
\Crefname{cond}{Condition}{Conditions}
\crefname{def}{Definition}{Definitions}
\Crefname{def}{Definition}{Definitions}
\crefname{eqrl}{Equivalence}{Equivalences}
\Crefname{eqrl}{Equivalence}{Equivalences}
\crefname{eg}{Example}{Examples}
\Crefname{eg}{Example}{Examples}
\crefname{fml}{Formula}{Formulas}
\crefname{fml}{Formula}{Formulas}
\crefname{ineq}{Ineq.}{Ineqs.}
\Crefname{ineq}{Inequality}{Inequalities}
\crefname{prob}{Problem}{Problems}
\Crefname{prob}{Problem}{Problems}
\crefname{rec}{Recurrence}{Recurrences}
\Crefname{rec}{Recurrence}{Recurrences}
\crefname{rl}{Relation}{Relations}
\crefname{rl}{Relation}{Relations}
\crefname{rst}{Result}{Results}
\Crefname{rst}{Result}{Results}
\crefname{lem}{Lemma}{Lemmas}
\Crefname{lem}{Lemma}{Lemmas}
\crefname{ini}{Initiation}{Initiations}
\Crefname{ini}{Initiation}{Initiations}
\crefname{thm}{Theorem}{Theorems}
\Crefname{thm}{Theorem}{Theorems}
\crefname{cor}{Corollary}{Corollaries}
\Crefname{cor}{Corollary}{Corollaries}
\crefname{sec}{\S\!}{\S\!}
\Crefname{sec}{Section}{Sections}
\crefname{ssec}{\S\!}{\S\!}
\Crefname{ssec}{Subsection}{Subsections}
\crefname{sssec}{\S\!}{\S\!}
\Crefname{sssec}{Subsubsection}{Subsubsections}
\def\W{\mathcal{W}}
\begin{document}

\title[]{{Common zeros of polynomials \\[5pt]
satisfying a recurrence of order two}}

\author{Dannielle D.D. Jin}
\address{
School of Mathematics and Statistics, Beijing Institute of Technology, 102488 Beijing, P. R. China
}
\email{ddj.combin@gmail.com}

\author[D.G.L. Wang]{David G.L. Wang$^\dag$$^\ddag$}
\address{
$^\dag$School of Mathematics and Statistics, Beijing Institute of Technology, 102488 Beijing, P. R. China\\
$^\ddag$Beijing Key Laboratory on MCAACI, Beijing Institute of Technology, 102488 Beijing, P. R. China}
\email{david.combin@gmail.com}

\keywords{
common zero; 
polynomial;
real-rooted; 
recurrence;
root distribution}

\begin{abstract}
We give a characterization of common zeros of a sequence of univariate polynomials $W_n(z)$ defined by a recurrence of order two with polynomial coefficients, and with $W_0(z)=1$. Real common zeros for such polynomials with real coefficients are studied further. This paper contributes to the study of root distribution of recursive polynomial sequences.
\end{abstract}

\subjclass[2010]{03D20, 03D80}

\maketitle                   

\parskip 8pt

\section{Introduction}
The root distribution of a single polynomial is a long-standing topic all along the history of mathematics; 
see Rahman and Schmeisser's book \cite{RS02B}. 
As Gian-Carlo Rota~\cite{Rota85} left to us, 
``The one contribution of mine that I hope will be remembered has consisted in just pointing out that all sorts 
of problems of combinatorics can be viewed as problems of location of the zeros of certain polynomials ...''. 
Apart from combinatorics, Robin Pemantle~\cite{Pem13} presented that ``The `geometry' of a polynomial 
refers to the geometry of its zero set. In fact, for an algebraic geometer, a polynomial is equated with its zero set.''
For example, both the real-rootedness and the stability of polynomials attract; 
see Stanley~\cite[\S 4]{Sta00} and Borcea and Br\"anden~\cite{BB09}.

This paper concerns common zeros of distinct polynomials, 
as a special circumstance of root distribution.
In their study of the common zero structure of rational matrix functions,
Lerer and Rodman \cite{LR96} pointed that
``the problem of finding common zeros of polynomials ...  has kept the attention of mathematicians for centuries. 
The first motivation came from the problem of determining the intersection points 
of two algebraic curves \cite{Die85B}. 
Later, the study of the asymptotic stability of linear differential and finite-difference equations 
with constant coefficients has created a great interest in the problem of 
determining the location of the zeros of a polynomial with respect to the imaginary axis and the unit circle 
\cite{KN81}. 
The latter problem can be viewed as a specific common zeros problem ...''.
Common zeros of kinds of special functions has also received much attention,
such as Bessel functions~\cite{BK78}, Legendre's associated functions~\cite{Lac84}, 
and $L$-functions~\cite{Li12}.

Motived by the LCGD conjecture from topological graph theory,
Gross, Mansour, Tucker and the second author~\cite{GMTW16-01,GMTW16-10}
studied the root distribution of polynomials satisfying recurrences 
of order two, subject to some conditions on the polynomial coefficients. 

\begin{dfn}
We call a sequence $\W=\{W_n(z)\}_{n\ge0}$ of polynomials 
a {\em recursive polynomial sequence of order two} if
\begin{equation}\label[rec]{rec:order2}
W_n(z)\=A(z)W_{n-1}(z)+B(z)W_{n-2}(z),\qquad\text{for $n\ge 2$},
\end{equation}
where $A(z)$ and $B(z)$ are polynomials with complex coefficients, independent of $n$. 
We call a complex number $c$ a {\em common zero} of $\W$ if 
\begin{equation}\label{def:cr}
W_{s}(c)=W_{t}(c)=0\qquad\text{for some $s\ne t$}.
\end{equation}
We call $\W$ {\em normalized} if $W_0(z)=1$.
\end{dfn}

Orthogonal polynomials are involved in our study,
since they can be defined by \cref{rec:order2} with a linear polynomial coefficient $A(z)$
and a constant polynomial coefficient $B(z)$; 
see Andrews, Richard, and Ranjan's book~\cite{ARR99B}
for basic information on orthogonal polynomials.
Common zeros of $\binom{n-d+1}{d}$ multivariate orthogonal polynomials 
of degree $n$ in $d$ variables has been used~\cite{Xu94} to 
study joint eigenvalues of truncated block Jacobi matrices.

Determining the set of limit points is one of the critical topics in the study of root distribution;
see \cite{BKW75,BG07}.
Any limit point of the union of zero sets of all polynomials is associated with
a sequence of convergent distinct zeros, except when the limit point itself is a common zero. 
As an application of \cref{cor:period}, i.e., of that any common zero of polynomials defined by \cref{rec:order2}
occurs periodically, we conclude that every common zero is a limit point of the union of zero sets.

We organize this paper as follows.
In \cref{sec:complex} we deduce a characterization 
for common zeros of polynomials satisfying \cref{rec:order2},
with neither restriction on the degrees of the polynomial coefficients $A(z)$ and $B(z)$,
nor limitation on the realities of coefficients of $A(z)$ and $B(z)$; see \cref{thm:cr}.
\Cref{sec:real} consists of some further results for real common zeros 
when $A(z)$ and $B(z)$ are real polynomials.

\section{The characterization for common zeros}\label[sec]{sec:complex}

Throughout this paper, we use the notation $\W=\{W_n(z)\}_{n\ge0}$
to denote a normalized recursive polynomial sequence of order two.
First of all, we consider the roots of the polynomial coefficients $B(z)$.

\begin{lem}\label{lem:AB=0}
If a common zero $c$ of $\W$ satisfies $B(c)=0$, 
then $W_n(c)=0$ for all $n\ge2$.
\end{lem}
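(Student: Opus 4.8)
The plan is to evaluate the recurrence \cref{rec:order2} at $z=c$ and exploit that the $B$-term drops out. Since $B(c)=0$, \cref{rec:order2} collapses to the first-order relation $W_n(c)=A(c)\,W_{n-1}(c)$ for every $n\ge2$, whence by a one-line induction
\[
W_n(c)\=A(c)^{\,n-1}\,W_1(c)\all{n\ge1}.
\]
So the entire tail $\bigl(W_n(c)\bigr)_{n\ge1}$ is a geometric-type sequence controlled by the two numbers $A(c)$ and $W_1(c)$, and the whole lemma reduces to understanding when these vanish.

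Next I would split into cases. If $A(c)=0$, then directly from \cref{rec:order2} we get $W_n(c)=A(c)W_{n-1}(c)+B(c)W_{n-2}(c)=0$ for all $n\ge2$, which is exactly the claim. If $W_1(c)=0$, then the displayed identity gives $W_n(c)=0$ for all $n\ge1$, again more than enough. The remaining case is $A(c)\ne0$ and $W_1(c)\ne0$: here the displayed identity forces $W_n(c)\ne0$ for every $n\ge1$, and since $\W$ is normalized we also have $W_0(c)=1\ne0$. Thus $W_n(c)\ne0$ for all $n\ge0$, which contradicts the hypothesis that $c$ is a common zero of $\W$, i.e.\ that $W_s(c)=W_t(c)=0$ for some $s\ne t$. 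Hence this last case is vacuous, and in either of the two remaining cases the conclusion $W_n(c)=0$ for all $n\ge2$ holds.

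The argument is short, so there is no real obstacle; the only point that needs a moment's care is the final case, where one must remember to use the normalization $W_0(z)=1$ in order to rule out $n=0$ serving as one of the two indices witnessing the common zero. Everything else is a routine induction on \cref{rec:order2}.
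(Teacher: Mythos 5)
Your proof is correct and follows essentially the same route as the paper: evaluate \cref{rec:order2} at $c$ to get $W_n(c)=A(c)^{n-1}W_1(c)$, then use the common-zero hypothesis (together with $W_0(c)=1$) to force $A(c)W_1(c)=0$, hence $W_n(c)=0$ for all $n\ge2$. The only difference is that you spell out explicitly the case analysis that the paper compresses into ``since $c$ is common, we find $A(c)W_1(c)=0$.''
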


\begin{proof}
Suppose that $B(c)=0$.
Then \cref{rec:order2} implies $W_n(c)=A^{n-1}(c)W_1(c)$ for all $n\ge2$. Since~$c$ is common, we find $A(c)W_1(c)=0$. 
Thus $W_n(c)=0$ for all $n\ge 2$.
This completes the proof.
\end{proof}

In the remaining of this paper, we let $c\in\C$ such that $B(c)\ne0$.
Note that the sequence $\{W_n(c)\}_{n\ge0}$ of complex numbers satisfies
the recurrence 
\[
W_n(c)\=A(c)W_{n-1}(c)+B(c)W_{n-2}(c).
\]
This leads us to find the general solution 
for a sequence of numbers defined by a recurrence relation of order two.

For any $z\in\C\backslash\{0\}$,
we denote the principle value of the argument of $z$ 
by $\Arg(z)$, and restrict $\Arg(z)\in(-\pi,\,\pi]$ as usual.
We use the notation 
\[
\sqrt{z}\=\sqrt{|z|}\,e^{i\cdotp\Arg(z)/2},
\]
so that $\sqrt{z}$ is a complex number lying
either on the right-half open plane or on the positive imaginary axis.

\begin{lem}\label{lem:00}
Let $A,B\in\C$.
Let $\{W_n\}_{n\ge0}$ be a sequence of numbers defined by $W_0=1$
and $W_n=AW_{n-1}+BW_{n-2}$ for $n\ge 2$.
Then for any $n\ge 1$, we have 
\[
W_n\=\begin{cases}
\displaystyle \frac{A^{n-1}}{2^n}\bg{A+n(2W_1-A)},
&\textrm{if $\Delta=0$},\\[8pt]
\displaystyle {g^+(A+\sqrt{\Delta}\,)^n-g^-(A-\sqrt{\Delta}\,)^n
\over 2^n\cdot \sqrt{\Delta}},&\textrm{if $\Delta\neq0$},
\end{cases}
\]
where $\Delta=A^2+4B$ and $g^\pm=(2W_1-A\pm\sqrt{\Delta})/2$.
\end{lem}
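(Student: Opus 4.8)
The plan is to solve the linear recurrence $W_n = AW_{n-1} + BW_{n-2}$ with the standard characteristic-root method, being careful to package the two cases in exactly the form stated. First I would introduce the characteristic polynomial $x^2 - Ax - B$, whose discriminant is $\Delta = A^2 + 4B$ and whose roots are $x^\pm = (A \pm \sqrt{\Delta})/2$. Note these are $x^\pm = (A \pm \sqrt{\Delta})/2$, so $2^n x^\pm{}^n = (A \pm \sqrt{\Delta})^n$, which explains the $2^n$ in the denominators of the claimed formula.

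In the case $\Delta \neq 0$, the roots $x^+$ and $x^-$ are distinct, so the general solution is $W_n = \alpha (x^+)^n + \beta (x^-)^n$ for constants $\alpha, \beta$ determined by the initial data $W_0 = 1$ and the given value of $W_1$. Solving the $2\times 2$ linear system $\alpha + \beta = 1$, $\alpha x^+ + \beta x^- = W_1$ gives $\alpha = (W_1 - x^-)/(x^+ - x^-)$ and $\beta = (x^+ - W_1)/(x^+ - x^-)$. Since $x^+ - x^- = \sqrt{\Delta}$ and $x^\pm = (A \pm \sqrt{\Delta})/2$, one computes $\alpha = (2W_1 - A + \sqrt{\Delta})/(2\sqrt{\Delta}) = g^+/\sqrt{\Delta}$ and similarly $\beta = -g^-/\sqrt{\Delta}$. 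Substituting and clearing the $2^n$ from $(x^\pm)^n$ yields precisely $W_n = \bigl(g^+(A+\sqrt{\Delta})^n - g^-(A-\sqrt{\Delta})^n\bigr)/(2^n\sqrt{\Delta})$. A small point worth checking is that this formula is valid at $n=1$ as stated, and indeed substituting $n=1$ recovers $W_1$.

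In the case $\Delta = 0$, the characteristic polynomial has the double root $x_0 = A/2$, and the general solution is $W_n = (\alpha + \beta n)(A/2)^n$, at least when $A \neq 0$; the degenerate subcase $A = 0$ (which forces $B = 0$ too, since $\Delta = A^2 + 4B = 0$) must be handled separately but is trivial since then $W_n = 0$ for $n \ge 2$ and the claimed formula also vanishes there. Assuming $A \neq 0$, imposing $W_0 = 1$ gives $\alpha = 1$, and imposing the value $W_1$ gives $(1 + \beta)(A/2) = W_1$, hence $\beta = (2W_1 - A)/A = (2W_1 - A)/A$. Then $W_n = (1 + \beta n)(A/2)^n = \bigl(A + n(2W_1 - A)\bigr)(A/2)^n / A = A^{n-1}\bigl(A + n(2W_1 - A)\bigr)/2^n$, as claimed.

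I do not expect a genuine obstacle here: this is a textbook computation. The only points requiring a bit of care are the bookkeeping of the powers of $2$ (so that the answer comes out in the normalized form with $(A \pm \sqrt{\Delta})^n$ rather than $(x^\pm)^n$), the verification that both closed forms are correct already at $n = 1$ (not just $n \ge 2$), and the observation that the formulas do not actually depend on which square root of $\Delta$ is chosen, since swapping $\sqrt{\Delta} \mapsto -\sqrt{\Delta}$ simultaneously swaps $g^+ \leftrightarrow g^-$ and the two bracketed terms, leaving $W_n$ invariant — so the specific branch convention fixed before the lemma is a convenience, not a necessity. One can also double-check the two cases agree in spirit by noting that the $\Delta \neq 0$ formula tends to the $\Delta = 0$ formula as $\Delta \to 0$, via L'Hôpital or a Taylor expansion, though this limiting check is optional and I would relegate it to a remark if included at all.
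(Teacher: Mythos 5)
Your proposal is correct. The paper itself handles the lemma in one line of a different flavor: it observes that $W_0$ and $W_1$ together with the recurrence determine every $W_n$ uniquely, and then simply declares the verification that the stated closed forms satisfy the recurrence to be routine; no derivation is offered. You instead derive the formulas by the characteristic-root method: roots $x^{\pm}=(A\pm\sqrt{\Delta})/2$ of $x^2-Ax-B$, general solution $\alpha (x^+)^n+\beta (x^-)^n$ in the case $\Delta\ne0$ and $(\alpha+\beta n)(A/2)^n$ in the double-root case, with $\alpha,\beta$ solved from $W_0=1$ and the given $W_1$; your algebra for $\alpha=g^+/\sqrt{\Delta}$, $\beta=-g^-/\sqrt{\Delta}$ and for the $\Delta=0$ case is right, and the bookkeeping of the powers of $2$ works out as you say. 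Both arguments ultimately rest on the same uniqueness observation (matching at $n=0,1$ and propagating by the recurrence), so the difference is derivation versus verification: your route explains where $g^{\pm}$, the factor $2^n$, and the linear-in-$n$ factor in the double-root case come from, at the cost of invoking the standard structure of solutions of a second-order linear recurrence and of splitting off the degenerate subcase $A=0$, which you handle correctly (then $B=0$, both sides vanish for $n\ge2$, and at $n=1$ the closed form gives $A^0\cdot 2W_1/2=W_1$ --- worth stating explicitly since the lemma is claimed for all $n\ge1$). Your closing remark that the formulas are invariant under $\sqrt{\Delta}\mapsto-\sqrt{\Delta}$, because $g^+\leftrightarrow g^-$ swap simultaneously, is correct and a useful observation the paper does not make; the fixed branch is indeed only a convenience at this point (it matters later when $u$, $v$ and their arguments are defined).
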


\begin{proof}
Given the initial complex numbers $W_0$ and $W_1$,
the recurrence relation uniquely determines every complex number $W_n$. Thus it suffices to verify that the solution satisfies the recurrence, which is routine. 
\end{proof}

A real number version of \cref{lem:00} can be 
found in \cite{Bat67B,GMTW16-01,GMTW16-10}.
Inspired from \cref{lem:00}, we define
\[
\Delta(z)=A^2(z)+4B(z)
\rmand
g^\pm(z)=\frac{2W_1(z)-A(z)\pm\sqrt{\Delta(z)}}{2}.
\]

\begin{lem}\label{lem:<>0}
Let $c\in\C$ such that $B(c)\ne0$. Then we have the following.
\begin{itemize}
\itemsep 5pt
\item[(i)]
$A(c)\pm\sqrt{\Delta(c)}\ne0$.
\item[(ii)]
If $W_n(c)=0$ for some $n$, and $\Delta(c)\ne0$, then $g^\pm(c)\ne0$.
\item[(iii)]
If $c$ is a common zero of $\W$, 
then $\Delta(c)\ne0$.
\end{itemize} 
\end{lem}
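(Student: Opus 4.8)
The plan is to establish the three items in order, since (ii) invokes (i) and all three rest on the explicit formulas in \cref{lem:00}. Throughout I would write $A=A(c)$, $B=B(c)$, $\Delta=\Delta(c)=A^2+4B$, $W_1=W_1(c)$ and $g^\pm=g^\pm(c)$, and keep at hand the identity $g^\pm=\bigl((2W_1-A)\pm\sqrt{\Delta}\,\bigr)/2$, hence $g^++g^-=2W_1-A$ and $g^+-g^-=\sqrt{\Delta}$, together with $(\sqrt{\Delta}\,)^2=\Delta$. Item (i) is then immediate: if $A+\sqrt{\Delta}=0$ or $A-\sqrt{\Delta}=0$, then $\sqrt{\Delta}=-A$ or $\sqrt{\Delta}=A$ respectively, so squaring gives $\Delta=A^2$, whence $4B=\Delta-A^2=0$, contradicting $B\ne0$.

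For (ii), assume $\Delta\ne0$; since $\W$ is normalized, $W_0(c)=1\ne0$, so any index $n$ with $W_n(c)=0$ has $n\ge1$ and the $\Delta\ne0$ branch of \cref{lem:00} applies. I would argue by contradiction. If $g^+=0$, then $g^-=-\sqrt{\Delta}$ by the identities above, and the formula collapses to $W_n(c)=(A-\sqrt{\Delta}\,)^n/2^n$ for every $n\ge1$; by (i) the base $A-\sqrt{\Delta}$ is nonzero, so $W_n(c)\ne0$ for all $n\ge1$, and also $W_0(c)=1$, contradicting the hypothesis. The case $g^-=0$ is symmetric: there $g^+=\sqrt{\Delta}$ and $W_n(c)=(A+\sqrt{\Delta}\,)^n/2^n\ne0$ for all $n\ge1$. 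Hence both $g^+(c)\ne0$ and $g^-(c)\ne0$.

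For (iii), let $c$ be a common zero, so $W_s(c)=W_t(c)=0$ for some $s\ne t$, necessarily with $s,t\ge1$ as above. Suppose for contradiction that $\Delta=0$. Then $A^2=-4B\ne0$, so $A\ne0$, and the $\Delta=0$ branch of \cref{lem:00} gives $W_n(c)=A^{n-1}\bigl(A+n(2W_1-A)\bigr)/2^n$ for $n\ge1$. Since $A\ne0$, the condition $W_n(c)=0$ is equivalent to the affine equation $A+n(2W_1-A)=0$ in the unknown $n$, which has at most one solution (and none when $2W_1=A$, since it then reads $A=0$). So at most one index $n\ge1$ can be a zero of $\W$, contradicting $s\ne t$. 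The arguments are all short, so I do not expect a genuine obstacle; the only point demanding care is the bookkeeping of the fixed branch of $\sqrt{\Delta}$ — deriving consequences such as ``$g^+=0\Rightarrow g^-=-\sqrt{\Delta}$'' strictly from the displayed formula for $g^\pm$ and from $(\sqrt{\Delta}\,)^2=\Delta$, never appealing to which half-plane $\sqrt{\Delta}$ lies in.
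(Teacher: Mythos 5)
Your proposal is correct and follows essentially the same route as the paper: all three items are read off from the explicit formulas of \cref{lem:00}, with (i) obtained from $-4B(c)=A^2(c)-\Delta(c)$, and (ii), (iii) by contradiction using the identity $g^+-g^-=\sqrt{\Delta}$ and the $\Delta=0$ branch, respectively. The only differences are cosmetic (in (ii) the paper concludes $g^+=g^-=0$ and hence $\sqrt{\Delta}=0$ rather than showing $W_n(c)\ne0$ for all $n$, and in (iii) you spell out the step $A(c)\ne0$ that the paper leaves implicit), so no further comment is needed.
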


\begin{proof}
We show them individually.

\noindent{\bf (i).} 
We have 
$
0\ne-4B(c)
=A^2(c)-\Delta(c)
=\bg{A(c)+\sqrt{\Delta(c)}\,}\bg{A(c)-\sqrt{\Delta(c)}\,}$.

\noindent{\bf (ii).}
Suppose that $\Delta(c)\ne0$.
Since $W_n(c)=0$, \cref{lem:00} implies
\[
g^+(c)\bg{A(c)+\sqrt{\Delta(c)}\,}^n
=g^-(c)\bg{A(c)-\sqrt{\Delta(c)}\,}^n.
\]
If $g^+(c)g^-(c)=0$, 
then the above equation implies that $g^+(c)=g^-(c)=0$.
Consequently, the definition of $g^\pm(z)$ implies that $\Delta(c)=0$, a contradiction.

\noindent{\bf (iii).}
By \cref{lem:00} and the definition of common zeros, we infer that
\[
A(c)+s\bg{2W_1(c)-A(c)}
\=A(c)+t\bg{2W_1(c)-A(c)}
\=0
\]
for some distinct integers $s$ and $t$.
Thus $A(c)=0$ and
$\Delta(c)=A^2(c)+4B(c)\ne0$.
\end{proof}

By \cref{lem:<>0}, for any $c\in\C$ such that $B(c)\Delta(c)\ne0$, 
we can define 
\begin{align*}
u&\=\frac{g^-(c)}{g^+(c)}
\=\frac{2W_1(c)-A(c)-\sqrt{\Delta(c)}}{2W_1(c)-A(c)+\sqrt{\Delta(c)}},\rmand\\[5pt]
v&\=\frac{A(c)+\sqrt{\Delta(c)}}{A(c)-\sqrt{\Delta(c)}}.
\end{align*}

Next is a characterization of common zeros, 
read from \cref{lem:00} directly.
\begin{thm}\label{thm:pr}
Let $c\in\C$ such that $B(c)\ne0$.
Then $c$ is a common zero of $\W$ 
if and only if $\Delta(c)\ne0$ and there exist integers 
\[
p=\min\{\ell\in\Z^+\colon v^\ell=1\}\rmand
r=\min\{\ell\in\Z^+\colon v^\ell=u\}.
\]
\end{thm}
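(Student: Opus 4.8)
The plan is to translate the condition ``$c$ is a common zero'' into a statement purely about the two ratios $u$ and $v$, using the explicit formula of \cref{lem:00} for $W_n(c)$ in the case $\Delta(c)\ne0$. First I would dispense with the degenerate case: by \cref{lem:<>0}(iii), if $c$ is a common zero then automatically $\Delta(c)\ne0$, so the hypothesis $\Delta(c)\ne0$ costs nothing on the ``only if'' side, and on the ``if'' side it is assumed outright; thus throughout the argument I may assume $B(c)\Delta(c)\ne0$, which is exactly what licenses the definitions of $u$ and $v$. Abbreviate $\alpha=A(c)+\sqrt{\Delta(c)}$ and $\beta=A(c)-\sqrt{\Delta(c)}$, both nonzero by \cref{lem:<>0}(i), and $g^\pm=g^\pm(c)$. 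From \cref{lem:00}, for $n\ge1$ we have $2^n\sqrt{\Delta(c)}\,W_n(c)=g^+\alpha^n-g^-\beta^n$, so $W_n(c)=0$ if and only if $g^+\alpha^n=g^-\beta^n$.

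The next step is to reorganize this vanishing condition. If some $W_n(c)=0$, then \cref{lem:<>0}(ii) gives $g^+g^-\ne0$, so we may divide: $W_n(c)=0\iff (\alpha/\beta)^n=g^-/g^+$, i.e. $v^n=u$. In particular, a single index $n$ with $W_n(c)=0$ already forces $u$ to be a power of $v$. Now suppose $c$ is a common zero, so $W_s(c)=W_t(c)=0$ for some $s\ne t$; without loss of generality $s>t\ge1$ (note $W_0(c)=1\ne0$, so the two vanishing indices are both at least $1$). Then $v^s=u=v^t$, hence $v^{s-t}=1$ with $s-t$ a positive integer, so the set $\{\ell\in\Z^+\colon v^\ell=1\}$ is nonempty and has a minimum $p$; and since $v^t=u$ with $t\in\Z^+$, the set $\{\ell\in\Z^+\colon v^\ell=u\}$ is nonempty and has a minimum $r$. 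Conversely, suppose both $p$ and $r$ exist as described. Then $v^r=u$ gives $W_r(c)=0$, and $v^{r+p}=v^r\cdot v^p=u\cdot1=u$ gives $W_{r+p}(c)=0$; since $p\ge1$ we have $r\ne r+p$, so $c$ is a common zero.

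I do not anticipate a serious obstacle here --- the statement is, as the paper says, read off \cref{lem:00} ``directly'' --- but two small points deserve care. The first is the bookkeeping that both vanishing indices can be taken $\ge 1$, so that the formula of \cref{lem:00} (valid for $n\ge1$) applies and so that the differences and the indices themselves land in $\Z^+$; this is where the normalization $W_0(z)=1$ enters. The second is making sure the division steps are legitimate: dividing by $\beta^n$ is fine since $\beta\ne0$ by \cref{lem:<>0}(i), and dividing by $g^+$ is fine by \cref{lem:<>0}(ii), but one must first know that \emph{some} $W_n(c)$ vanishes before invoking (ii) --- on the ``if'' direction this is supplied by $v^r=u\Rightarrow W_r(c)=0$, and on the ``only if'' direction by the hypothesis that $c$ is common. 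Assembling these observations, the equivalence follows.
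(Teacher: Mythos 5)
Your proposal is correct and follows essentially the same route as the paper: both derive from \cref{lem:00} the equivalence $W_n(c)=0\iff v^n=u$ (after invoking \cref{lem:<>0} to justify $\Delta(c)\ne0$ and the nonvanishing of $g^\pm(c)$), then obtain $p$ and $r$ from two vanishing indices via $v^{s-t}=1$, and conversely exhibit distinct vanishing indices ($r$ and $r+p$ in your write-up, $kp+r$ for all $k$ in the paper). Your extra care about which divisions are legitimate and why the vanishing indices are $\ge1$ is a faithful elaboration of the same argument, not a different one.
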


\begin{proof}
Since $B(c)\ne0$, the number $v$ is well-defined from \cref{lem:<>0}.
We show the necessity and sufficiency individually. 

\noindent{\bf Necessity.}
Suppose that $c$ is a common zero of $\W$.
By \cref{lem:<>0}, we have $\Delta(c)\ne0$ and $g^\pm(c)\ne0$.
Thus the number $u$ is well-defined.
From \cref{lem:00}, we see that 
\begin{equation}\label[eqrl]{eqrl:vn=u}
W_n(c)=0
\eqrl
u=v^n.
\end{equation}
By \cref{eqrl:vn=u} and the definition of common zeros, 
we have $u=v^s=v^t$ for some distinct integers $s$ and $t$,
which implies the existence of the desired integer~$r$ immediately.
Since $uv\ne0$, we have $v^{|s-t|}=1$, 
and obtain the existence of the desired integer~$p$.

\noindent{\bf Sufficiency.}
Suppose that $\Delta(c)\ne0$.
By the existence of the number $r$, 
we see that the number $u=v^r$. 
From definition of $p$, we have $v^p=1$.
Thus for any $k\in\N$, one infers that
\[
v^{kp+r}\=(v^p)^k\cdot v^r\=1^k\cdotp u\=u.
\]
By \cref{eqrl:vn=u}, we have $W_{kp+r}(c)=0$. 
Thus $c$ is a common zero of $\W$.
\end{proof}

The numbers $p$ and $r$ depend on the number $c$, 
and on the polynomials $A(z)$, $B(z)$, and $W_1(z)$. 
They will be used frequently.

\begin{thm}\label{thm:n}
If $c$ is a common zero of $\W$ such that $B(c)\ne0$,
then $p\ge2$, $r\in\{1,2,\ldots,p-1\}$, and
\[
W_n(c)=0
\eqrl
n\equiv r\pmod{p}.
\]
\end{thm}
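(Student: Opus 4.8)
The plan is to derive everything from the characterization in \cref{thm:pr}, together with the equivalence \cref{eqrl:vn=u}, namely that $W_n(c)=0$ iff $u=v^n$. Since $c$ is a common zero with $B(c)\ne0$, \cref{thm:pr} already supplies the integers $p=\min\{\ell\in\Z^+\colon v^\ell=1\}$ and $r=\min\{\ell\in\Z^+\colon v^\ell=u\}$, so these are well-defined positive integers to begin with.

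First I would establish the periodicity statement. Suppose $n\equiv r\pmod p$, say $n=kp+r$ with $k\in\N$ (note $n\ge r\ge1$, so $k\ge0$); then $v^n=(v^p)^k v^r=u$, hence $W_n(c)=0$ by \cref{eqrl:vn=u}. Conversely, suppose $W_n(c)=0$, so $v^n=u=v^r$. Writing $n=qp+\rho$ with $0\le\rho<p$ via division with remainder, we get $v^\rho=v^{n}v^{-qp}=u\cdot 1=u$ (using $v\ne0$ from \cref{lem:<>0}(i), so negative powers make sense). If $\rho=0$ then $u=v^0=1$, whence $v^r=u=1$; but then by minimality of $p$ we would have $p\le r$, which will be excluded below — more cleanly, $\rho\ge1$ because $\rho$ is a nonnegative integer with $v^\rho=u$, and if $\rho=0$ we could still argue: $u=1=v^p$ forces $r\le p$ by minimality of $r$, and $r\ge1$; combined with the forthcoming bound $r<p$ this is consistent, so instead I rule out $\rho=0$ directly by noting that if $u=1$ then $r$ satisfies $v^r=1$, so $p\le r$ by minimality of $p$. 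Once I prove $r<p$ (next paragraph), the case $\rho=0$ is impossible, so $1\le\rho<p$ and $v^\rho=u$ forces $\rho\ge r$ by minimality of $r$; but $\rho<p$ and $\rho\equiv n\pmod p$, and I still need $\rho=r$. For that, observe $v^{\rho-r}=u/u=1$ with $0\le\rho-r<p$, so minimality of $p$ forces $\rho-r=0$, i.e. $\rho=r$ and $n\equiv r\pmod p$.

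Next I would pin down the ranges of $p$ and $r$. The bound $p\ge2$: if $p=1$ then $v=1$, i.e. $A(c)+\sqrt{\Delta(c)}=A(c)-\sqrt{\Delta(c)}$, forcing $\sqrt{\Delta(c)}=0$ and hence $\Delta(c)=0$, contradicting \cref{lem:<>0}(iii) (or the $\Delta(c)\ne0$ clause of \cref{thm:pr}). The bound $r\le p-1$: by definition $v^p=1$, and $u=g^-(c)/g^+(c)$; I claim $u=v^p$ would force a contradiction, but more simply, I note that $v^r=u$ and I want $r<p$. Suppose $r\ge p$; then $v^{r-p}=v^r v^{-p}=u\cdot 1=u$ with $0\le r-p<r$, contradicting the minimality of $r$ — unless $r-p=0$, i.e. $r=p$, which gives $u=v^p=1$. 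So the only way $r\ge p$ is $r=p$ with $u=1$. I must exclude $u=1$: if $u=1$ then $g^+(c)=g^-(c)$, so $\sqrt{\Delta(c)}=0$, again contradicting $\Delta(c)\ne0$. Hence $r<p$, i.e. $r\in\{1,2,\ldots,p-1\}$.

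The main obstacle, such as it is, is bookkeeping: making sure the division-with-remainder argument handles the edge case $\rho=0$ correctly and that the exclusions of $p=1$ and $u=1$ are cleanly separated from the periodicity argument so there is no circularity. Once $r<p$ and $u\ne1$ are in hand, the equivalence $W_n(c)=0 \iff n\equiv r\pmod p$ follows from the two directions above, with the forward direction using only $v\ne0$ and the minimality of $p$, and the backward direction being the one-line computation $v^{kp+r}=u$. I would present the ranges of $p$ and $r$ first, then the equivalence.
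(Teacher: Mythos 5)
Your proposal is correct and follows essentially the same route as the paper: both deduce the equivalence from \cref{thm:pr} and \cref{eqrl:vn=u}, prove $p\ge2$ by noting $v=1$ would force $\Delta(c)=0$, prove $r\le p-1$ by combining $v^{r-p}=u$ with the exclusion of $u=1$, and settle the converse by division with remainder plus minimality of $p$. The only difference is cosmetic: the paper's use of $v^{|r-r''|}=1$ with $|r-r''|\le p-1$ absorbs the remainder-zero edge case automatically, whereas your write-up handles $\rho=0$ by a separate (correct but more laborious) exclusion of $u=1$.
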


\begin{proof}
By \cref{lem:<>0}, we have $\Delta(c)\ne0$.
Thus the numbers $u$ and $v$ are well-defined and nonzero.
By \cref{thm:pr}, the integers $p$ and $r$ are well-defined.

If $p=1$, then $v=1$ since $v^p=1$.
Thus $\Delta(c)=0$ from the definition of $v$, a contradiction.
This proves $p\ge2$. 
Note that $v^{r-p}=v^r=u$.
By the minimality of~$r$, we have $r\le p$. 
If $r=p$, then $u=1$, 
which implies $\Delta(c)=0$ from the definition of~$u$, a contradiction.
This proves $r\in\{1,2,\ldots,p-1\}$.

If $n\equiv r\pmod{p}$, 
then $v^n=v^r=u$ from the definitions of $p$ and $r$.
By \cref{eqrl:vn=u}, we obtain $W_n(c)=0$.
Conversely, suppose that $u=v^n$. 
Let $r''\in\{0,1,\ldots,p-1\}$ be the unique integer 
such that $n\equiv r''\pmod{p}$.
Then $v^r=v^n=v^{r''}$.
It follows that \hbox{$v^{|r-r''|}=1$}. 
Since $|r-r''|\le p-1$, the minimality of~$p$ implies $r=r''$.
This proves the desired equivalence.
\end{proof}

We recognize the following periodicity result immediately.

\begin{cor}\label{cor:period}
The subscripts $n$ of polynomials $W_n(z)$ 
which share a common zero form an arithmetic progression.
\end{cor}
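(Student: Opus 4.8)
The plan is to fix a common zero $c$ of $\W$ and to show that the set $S_c=\{n\ge0\colon W_n(c)=0\}$ is an arithmetic progression, treating separately the two cases $B(c)=0$ and $B(c)\ne0$ already singled out in this section.

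First I would dispose of the degenerate case $B(c)=0$. Here \cref{lem:AB=0} gives $W_n(c)=0$ for all $n\ge2$, whereas $W_0(c)=W_0=1\ne0$ because $\W$ is normalized. A quick inspection at $n=1$ then shows that $S_c=\{1,2,3,\dots\}$ when $W_1(c)=0$ and $S_c=\{2,3,4,\dots\}$ when $W_1(c)\ne0$; in either situation $S_c$ is an (infinite) arithmetic progression of common difference $1$.

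Next, in the main case $B(c)\ne0$, I would read off the conclusion directly from \cref{thm:n}: the integers $p$ and $r$ are well defined with $p\ge2$ and $r\in\{1,2,\dots,p-1\}$, and $W_n(c)=0$ holds if and only if $n\equiv r\pmod p$. Since $r\ge1$, the least nonnegative integer in this residue class is $r$ itself, so $S_c=\{r+kp\colon k\ge0\}$, an arithmetic progression of common difference $p$. Combining this with the previous paragraph proves the corollary.

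The corollary is essentially a bookkeeping consequence of \cref{lem:AB=0,thm:n}, so I do not anticipate any real obstacle. The only point requiring care is that \cref{thm:n} was stated under the standing hypothesis $B(c)\ne0$, so the branch $B(c)=0$ must be picked up by hand from \cref{lem:AB=0}; and one should not skip the small-index checks at $n=0$ and $n=1$, since those are what pin down the first term of the progression.
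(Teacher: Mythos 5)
Your argument is correct and is essentially the paper's own proof: the paper simply states the corollary as immediate from \cref{lem:AB=0,thm:n}, and your two cases ($B(c)=0$ handled via \cref{lem:AB=0}, $B(c)\ne0$ via the congruence in \cref{thm:n}) spell out exactly that reasoning, with the small-index checks at $n=0,1$ being a reasonable extra bit of bookkeeping.
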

\begin{proof}
Immediate from \cref{lem:AB=0,thm:n}.
\end{proof}

Denote $\Q\pi=\{x\pi\colon x\in\Q\}$, 
where $\Q$ stands for the field of rational numbers.
For any $z\in\C\backslash\{0\}$, we denote $p^*(z)=p$, 
if $\Arg(z)=2q\pi/p$ for some integers $p\in\Z^+$ and $q\in\Z$ 
such that $(p,q)=1$.
We use the binary relation symbol $\prl$ to denote the parallel relation, 
$\perp$ the  perpendicular relation, 
and $|$ the divisible relation.

\begin{thm}\label{thm:cr}
Let $c\in\C$ such that $B(c)\ne0$.
Then $c$ is a common zero of $\W$ if and only if 
the following conditions hold:
\begin{itemize}
\itemsep 5pt
\item[(i)]
$A^2(c)=x\cdotp B(c)$ for some $-4<x\le 0$;
\item[(ii)]
$A(c)\prl W_1(c)$;
\item[(iii)]
$\Arg(u),\Arg(v)\in\Q\pi$ and $p^*(u)\,|\,p^*(v)$.
\end{itemize}
\end{thm}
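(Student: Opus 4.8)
The plan is to deduce the theorem from \cref{thm:pr} after recasting its conclusion in geometric language. By \cref{thm:pr} (and \cref{lem:<>0}), for $c$ with $B(c)\ne0$ the assertion that $c$ is a common zero of $\W$ is equivalent to the conjunction of three facts: that $\Delta(c)\ne0$; that $v$ is a root of unity (this is exactly the existence of $p$, and then $p$ is its order); and that $u\in\langle v\rangle$ (this is the existence of $r$, once $v$ is a root of unity, since then $\{v^\ell:\ell\ge1\}=\langle v\rangle$). So it suffices to prove that this conjunction is equivalent to (i)$\,\wedge\,$(ii)$\,\wedge\,$(iii), and I would match the three statements to the three conditions one at a time. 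Throughout write $A=A(c)$, $B=B(c)$, $W_1=W_1(c)$ and $\Delta=\Delta(c)=A^2+4B$.

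First I would show that (i) holds if and only if $\Delta\ne0$ and $|v|=1$. Expanding moduli gives
\[
|v|^2-1\;=\;\frac{|A+\sqrt{\Delta}\,|^2-|A-\sqrt{\Delta}\,|^2}{|A-\sqrt{\Delta}\,|^2}\;=\;\frac{4\,\mathrm{Re}\bigl(A\,\overline{\sqrt{\Delta}}\,\bigr)}{|A-\sqrt{\Delta}\,|^2},
\]
with nonzero denominator by \cref{lem:<>0}(i); hence $|v|=1$ iff $\mathrm{Re}(A\,\overline{\sqrt{\Delta}}\,)=0$, which, when $\Delta\ne0$, says that $A/\sqrt{\Delta}$ is purely imaginary. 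Squaring, ``$\Delta\ne0$ and $|v|=1$'' is equivalent to ``$A^2/\Delta$ is a real number $\le0$''; and from $A^2/\Delta=y\le0$ one gets $A^2=\tfrac{4y}{1-y}\,B$, where $y\mapsto\tfrac{4y}{1-y}$ is an increasing bijection of $(-\infty,0]$ onto $(-4,0]$. Reading this off is precisely condition (i), and it also accounts for the asymmetric range $-4<x\le0$.

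Next I would show that, assuming (i), one has (ii) if and only if $|u|=1$. Condition (i) gives $\Delta=(x+4)B$ with $x+4>0$, so $\sqrt{\Delta}=\sqrt{x+4}\,\sqrt{B}$ is a \emph{positive} real multiple of $\sqrt{B}$; and $A^2/B=x\le0$ forces $A$ to be a purely imaginary multiple of $\sqrt{B}$. The modulus computation above, applied now to $u=\tfrac{(2W_1-A)-\sqrt{\Delta}}{(2W_1-A)+\sqrt{\Delta}}$, shows $|u|=1$ iff $2W_1-A$ is a purely imaginary multiple of $\sqrt{\Delta}$, equivalently of $\sqrt{B}$; removing the already-purely-imaginary contribution of $A$, this holds iff $W_1$ is a purely imaginary multiple of $\sqrt{B}$, i.e.\ iff $W_1$ is a real multiple of $A$, which is condition (ii), $A\prl W_1$. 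Finally, assuming $|u|=|v|=1$, condition (iii) carries exactly the remaining group-theoretic content: a complex number of modulus $1$ is a root of unity iff its argument lies in $\Q\pi$, in which case $p^*$ of it is its order in the circle group and $\langle v\rangle$ is the group of $p^*(v)$-th roots of unity; so $u\in\langle v\rangle$ iff $u^{\,p^*(v)}=1$ iff $\Arg(u)\in\Q\pi$ and $p^*(u)\mid p^*(v)$. Combining the three steps gives both implications.

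The step I expect to be the real obstacle is not any of these computations but the careful treatment of the degenerate configurations, in which the ``purely imaginary multiple of $\sqrt{B}$'' dictionary of the middle step partly collapses: the cases $A(c)=0$ (then $x=0$ and $v=-1$), $W_1(c)=0$ (then $u=v$), and $2W_1(c)-A(c)=0$ or $g^+(c)=0$ (where $u$ equals $-1$ or is not a finite number). In each of these I would argue straight from \cref{lem:00} and \cref{thm:n} — for instance, when $A(c)=0$ the claim reduces to ``$c$ is a common zero $\iff W_1(c)=0$'' — and one must also fix the convention for the relation $\prl$ on the zero vector so that the boundary cases stay consistent with (i)--(iii). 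The only other routine point, the branch-cut bookkeeping for $\sqrt{\cdot}$, is harmless, since every scalar that appears ($4$ and $x+4$) is a positive real and thus $\sqrt{\lambda z}=\sqrt{\lambda}\,\sqrt{z}$ throughout.
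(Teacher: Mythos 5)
Your proposal is correct and, in outline, follows the paper's own route: reduce to \cref{thm:pr}, then translate the existence of $p$ and $r$ into modulus conditions (matching (i) and (ii)) plus an argument condition (matching (iii)). Your first two steps coincide in substance with the paper's Steps 1--2; the paper packages your computation of $|v|^2-1=4\,\mathrm{Re}\bigl(A\overline{\sqrt{\Delta}}\bigr)/|A-\sqrt{\Delta}|^2$ as the equivalence $|\alpha+\beta|=|\alpha-\beta|\iff\alpha\perp\beta\iff\alpha^2=y\beta^2$ with $y\le0$. Where you genuinely diverge is (iii): the paper proves necessity by computing $p^*(v)=p$ and $p^*(u)=p/(p,r)$ from minimality, and sufficiency by an explicit residue construction (choosing $r'$ with $qr'\equiv dh\pmod p$ from a complete residue system and ruling out $r'=0$), whereas you note that once $|u|=|v|=1$, the pair of statements ``$v$ is a root of unity and $u\in\langle v\rangle$'' is equivalent to (iii) because $\langle v\rangle$ is precisely the group of $p^*(v)$-th roots of unity. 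This gives a shorter, more conceptual treatment of the most technical part of the paper's argument. Your caution about degenerate configurations is also well placed: the paper's Step 2 isolates only $2W_1(c)-A(c)=0$, and its chain $(2W_1-A)\perp\sqrt{\Delta}\iff(2W_1-A)\prl A(c)\iff A(c)\prl W_1(c)$ silently assumes $A(c)\ne0$, so the $A(c)=0$ case and the convention for $\prl$ with a zero vector do need the separate treatment you sketch; note also that under (i)--(ii) one has $g^+(c)\ne0$ automatically (otherwise $2W_1(c)-A(c)=-\sqrt{\Delta(c)}$ would be a nonzero real multiple of $\sqrt{B(c)}$, contradicting the purely-imaginary dictionary), so $u$ is well defined in the sufficiency direction.
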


\begin{proof} 
Let $c\in\C$ such that $B(c)\ne0$. 
First of all, it is without loss of generality to suppose that 
$\Delta(c)\ne0$.
In fact, on the one hand, 
if $c$ is a common zero, then \cref{lem:<>0} implies $\Delta(c)\ne0$.
On the other hand, Condition (i) implies
\[
\Delta(c)\=A^2(c)+4B(c)\=(x+4)B(c)\;\ne\;0.
\] 

By \cref{thm:pr}, the number $c$ is a common zero if and only if 
there exist $p,r\in\Z^+$ such that $v^p=1$ and $v^r=u$.
It is clear that 
\[
v^p=1\eqrl
\begin{cases}
\,|v|^p=1,\\[3pt]
\,p\cdot \Arg(v)=2k\pi,\qquad\text{for some $k\in\Z$}.
\end{cases}
\]
Equally clear is that
\[
v^r=u\eqrl
\begin{cases}
\,|v|^r=|u|,\\[3pt]
\,\Arg(u)=r\cdot \Arg(v)+2k'\pi,\qquad\text{for some $k'\in\Z$}.
\end{cases}
\]
The equation system consisting of these 4 equations can be recast as 
\begin{align}
&|v|=|u|=1,\notag\\
&\Arg(v)=2q\pi/p,\qquad\text{for some $q\in\Z$ such that $(p,\,q)=1$},
\label{Arg:v}\\
&\Arg(u)=2qr\pi/p+2q'\pi,\qquad\text{for some $q'\in\Z$}.
\label{Arg:u}
\end{align}
We notice the following equivalence relation for any $\alpha,\beta\in\C$ such that $\beta\ne0$:
\begin{equation}\label[eqrl]{eqrl:perp}
|\alpha+\beta|=|\alpha-\beta|
\eqrl
\alpha\perp\beta
\eqrl
\alpha^2=y\beta^2\quad\text{for some $y\le 0$.}
\end{equation}
Now we show the desired equivalence in 4 steps.

\begin{step}\label{step:|v|=1}
{\bf (i) $\Leftrightarrow |v|=1$.}
From definition of $v$ and by \cref{eqrl:perp}, we have
\begin{align}
|v|=1&\eqrl
\bigl|A(c)+\sqrt{\Delta(c)}\,\bigr|=\bigl|A(c)-\sqrt{\Delta(c)}\,\bigr|\notag\\
&\eqrl A(c)\perp \sqrt{\Delta(c)}\label[eqrl]{pf:perp}\\
&\eqrl A^2(c)=x'\cdotp\Delta(c)\qquad\text{for some $x'\le 0$}\notag\\
&\eqrl A^2(c)=x\cdotp B(c)\qquad\text{for some $-4<x\le 0$}.\notag
\end{align}
\end{step}

\begin{step}\label{step:|u|=1}
{\bf If $|v|=1$, then (ii) $\Leftrightarrow |u|=1$.}
From definition of $u$, we have
\begin{equation}\label[eqrl]{eqrl1:u=1}
|u|=1
\eqrl
\bigl|2W_1(c)-A(c)+\sqrt{\Delta(c)}\,\bigr|
=\bigl|2W_1(c)-A(c)-\sqrt{\Delta(c)}\,\bigr|.
\end{equation}
When $2W_1(c)-A(c)=0$, (ii) is straightforward, 
and $|u|=1$ follows from \cref{eqrl1:u=1}.
Suppose that $2W_1(c)-A(c)\ne0$.
By \cref{eqrl1:u=1,pf:perp,eqrl:perp} and~(i), we can deduce that 
\begin{align*}
|u|=1
&\eqrl
\bg{2W_1(c)-A(c)}\perp\sqrt{\Delta(c)}\\
&\eqrl
\bg{2W_1-A(c)}\prl A(c)\\
&\eqrl
A(c)\prl W_1(c).
\end{align*}
\end{step}

\begin{step}
{\bf Necessity.}
\Cref{Arg:v,Arg:u} imply that $\Arg(v),\Arg(u)\in\Q\pi$. 
By the minimality of $p$ and $r$, we can deduce that 
\[
p^*(v)\=p
\rmand
p^*(u)\=\frac{p}{(p,\,r)}.
\]
Thus $p^*(u)\,|\,p^*(v)$.
This completes the proof for necessity.
\end{step}

\begin{step}
{\bf Sufficiency.}
Since $\Arg(u),\Arg(v)\in\Q(\pi)$, we can suppose that 
\begin{align}
\Arg(v)&\=\frac{2q\pi}{p^*(v)},
\qquad\text{where $(p^*(v),\,q)=1$},\rmand\notag\\[5pt]
\Arg(u)&\=\frac{2h\pi}{p^*(u)},
\qquad\text{where $(p^*(u),\,h)=1$}.\label{def:Arg:u}
\end{align}
By the minimality of $p$, we deduce that $p^*(v)=p$.
This proves \cref{Arg:v}. 

Let $S=\{0,\,1,\,\ldots,\,p-1\}$.
Since $p^*(u)$ divides $p$, we can suppose that 
\begin{equation}\label{def:d}
p\=d\cdotp p^*(u),\qquad\text{where $d\in\Z^+$}.
\end{equation}
Since $(p,\,q)=1$, the set $\{qk\colon k\in S\}$ forms a complete residue system modulo $p$. Thus we can define $r'$ to be the unique integer in $S$ such that 
\begin{equation}\label{mod:rq=hd}
qr'\;\equiv\; dh\pmod{p}.
\end{equation}
If $r'=0$, then $p\,|\,dh$.
In view of \cref{def:d}, we infer that $p^*(u)\,|\,h$.
But $(p^*(u),h)=1$, we must have $p^*(u)=1$, 
and thus $u\in\R^+$. 
From \cref{step:|u|=1}, we deduce that $u=1$.
Then the definition of $u$ implies $\Delta(c)=0$, a contradiction.
This proves that 
\[
r'\in\{1,2,\ldots,p-1\}.
\]
Now, by \cref{def:Arg:u,def:d,mod:rq=hd},   we can write
\begin{equation}\label{pf26}
\Arg(u)
\=\frac{2h\pi}{p^*(u)}
\=\frac{2dh\pi}{p}
\=\frac{2qr'\pi}{p}+2q'\pi
\=\Arg\bg{v^{r'}}+2q'\pi
\end{equation}
for some $q'\in\Z$.
Now, from \cref{step:|v|=1,step:|u|=1}, we have $|u|=|v|=1$.
It follows that $u=v^{r'}$.
Since $r$ is the unique integer in the set $\{1,2,\ldots,p\}$ such that $u=v^r$, we deduce that $r=r'$.
Thus \cref{pf26} becomes \cref{Arg:u}.
\end{step}
This completes the proof.
\end{proof}

We present two examples.
\begin{eg}\label{eg1}
Suppose that $W_0(z)=1$, $W_1(z)=z$, and 
\[
W_n(z)
=\bg{z^2+(1-\sqrt{3})z+1}W_{n-1}(z)-\frac{z^2}{2}\cdot W_{n-2}(z),
\]
for $n\ge2$. Then we have $W_{n}(e^{\pi i/6})=0$ if and only if
$n\equiv3\pmod{4}$.
\end{eg}
\begin{proof}
Let $c=e^{\pi i/6}$. Then $A(c)=c$ and $\Delta(c)=-c^2$. 
It follows that $v=-i$ and $u=i$. Hence $p^*(v)=4$ and $p^*(u)=4$.
By \cref{thm:cr}, the number $c$ is a common zero.
By \cref{thm:pr}, we have $p=4$ and $r=3$.
The desired equivalence thus follows from \cref{thm:n}.
\end{proof}

In \cref{eg:Ccoeff}, 
the coefficients of the polynomial $B(z)$ are not all real. 
\begin{eg}\label{eg:Ccoeff}
Suppose that $W_0(z)=1$, $W_1(z)=z$, and 
\[
W_n(z)=(4z^2+1)W_{n-1}(z)+e^{-2\pi i/3}\cdot z\cdot W_{n-2}(z), 
\]
for $n\ge2$. Then we have
$W_{n}(e^{\pi i/3}/2)=0$ 
if and only if
$n\equiv2\pmod{4}$.
\end{eg}

The next corollary gives further information for the argument $\Arg(v)$.

\begin{cor}\label{cor:arg}
Let $c$ be a common zero of $\W$ such that $B(c)\ne0$.
Then there is an integer $q$ such that $(p,\,q)=1$ and 
\[
\Arg(A(c)+\sqrt{\Delta(c)})-\Arg\bg{A(c)}\=\frac{q\pi}{p}.
\]
\end{cor}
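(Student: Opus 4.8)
The plan is to read the identity off the fact that, for a common zero $c$ with $B(c)\ne0$, the number $v$ is a primitive $p$-th root of unity. Since $c$ is a common zero, \cref{lem:<>0} gives $\Delta(c)\ne0$, so $v$ is well defined, and \cref{thm:pr} provides the integer $p=\min\{\ell\in\Z^+\colon v^\ell=1\}$. Hence $v$ is a primitive $p$-th root of unity; in particular $|v|=1$ and, writing $\theta=\Arg(v)$,
\[
v\=e^{i\theta}\qquad\text{with}\qquad\theta\=\frac{2q_0\pi}{p}
\]
for some integer $q_0$ satisfying $(p,q_0)=1$. Because the asserted equality involves $\Arg\bg{A(c)}$, I carry out the argument under the hypothesis $A(c)\ne0$ --- equivalently $-4<x<0$ in \cref{thm:cr}(i), equivalently $v\ne-1$; note that $A(c)=0$ forces $p=2$ and leaves $\Arg\bg{A(c)}$, hence the left-hand side, undefined.

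The computational core is the elementary identity
\[
\frac{A(c)+\sqrt{\Delta(c)}}{A(c)}\=\frac{2v}{v+1},
\]
which holds because $v\ne-1$: from the definition of $v$ one has $v+1=2A(c)/\bg{A(c)-\sqrt{\Delta(c)}}$, and clearing this denominator in the defining equation yields the claim. Substituting $v=e^{i\theta}$ and cancelling $e^{i\theta/2}$ from numerator and denominator then gives
\[
\frac{2v}{v+1}\=\frac{2e^{i\theta}}{e^{i\theta}+1}\=\frac{e^{i\theta/2}}{\cos(\theta/2)},
\]
where $\cos(\theta/2)\ne0$ precisely because $v\ne-1$. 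Thus $\bg{A(c)+\sqrt{\Delta(c)}}/A(c)$ is a nonzero real multiple of $e^{i\theta/2}$, so its argument is congruent to $\theta/2=q_0\pi/p$ modulo $\pi$.

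To conclude, I use $\Arg(z_1/z_2)\equiv\Arg(z_1)-\Arg(z_2)\pmod{2\pi}$ for nonzero $z_1,z_2$; both arguments on the right are legitimate here, since $A(c)+\sqrt{\Delta(c)}\ne0$ by \cref{lem:<>0}(i) and $A(c)\ne0$ by hypothesis. Combined with the previous step,
\[
\Arg\bg{A(c)+\sqrt{\Delta(c)}}-\Arg\bg{A(c)}\;\equiv\;\frac{q_0\pi}{p}\pmod{\pi},
\]
so this difference equals $(q_0+mp)\pi/p$ for some $m\in\Z$; taking $q=q_0+mp$ gives an integer with $(p,q)=(p,q_0)=1$, as required.

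The argument is short and I foresee no genuine obstacle. The two points that deserve care are that the reduction to arguments is only modulo $\pi$ --- the sign of $\cos(\theta/2)$ is lost, which is exactly why the integer $q$ in the conclusion need not be the one describing $\Arg(v)$ --- and the degenerate case $A(c)=0$ (equivalently $p=2$), where $\Arg\bg{A(c)}$ is undefined, so a careful write-up should either add $A(c)\ne0$ to the hypotheses or dispose of this case separately.
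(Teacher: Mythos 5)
Your proof is correct, but it reaches the conclusion by a genuinely different route from the paper. The paper argues geometrically: since $A(c)\perp\sqrt{\Delta(c)}$ (extracted from the proof of \cref{thm:cr}), the vectors $A(c)\pm\sqrt{\Delta(c)}$ are symmetric about $A(c)$, which gives $\Arg(v)=2\bigl(\theta-\Arg(A(c))\bigr)+2k'\pi$; then $v^p=1$ yields $\theta-\Arg(A(c))=q\pi/p$, and coprimality of $p$ and $q$ is obtained by dividing by $(p,q)$ and invoking the minimality of $p$. You instead work algebraically: from the definition of $v$ you derive $\bigl(A(c)+\sqrt{\Delta(c)}\bigr)/A(c)=2v/(v+1)=e^{i\theta/2}/\cos(\theta/2)$ with $\theta=\Arg(v)$, and you get coprimality for free from the fact that $v$ is a primitive $p$-th root of unity ($\Arg(v)=2q_0\pi/p$ with $(p,q_0)=1$) together with the invariance of the gcd under adding multiples of $p$. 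Your half-angle computation replaces the paper's reflection argument and its $2k\pi$ bookkeeping, and it makes the coprimality transparent without re-running the minimality argument; the paper's version, on the other hand, needs no explicit formula and records the perpendicularity fact \cref{arg:sym} for its own sake. One genuine merit of your write-up is the treatment of the degenerate case $A(c)=0$ (equivalently $v=-1$, $p=2$), which can occur for a common zero (see the first case of \cref{thm:cr:real}) and in which $\Arg\bg{A(c)}$ --- hence the statement itself --- is undefined; the paper's statement and proof silently assume $A(c)\ne0$, so flagging or excluding this case, as you do, is the right call. A minor remark: since $\theta=\Arg(v)\in(-\pi,\pi)$ when $v\ne-1$, one has $\cos(\theta/2)>0$, so you actually know the difference of arguments modulo $2\pi$ rather than only modulo $\pi$; your more conservative mod-$\pi$ conclusion still suffices for the corollary.
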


\begin{proof}
For any $c\in\C$, we define
\begin{equation}\label{def:theta}
\theta\=\Arg(A(c)+\sqrt{\Delta(c)}).
\end{equation}
For any nonzero complex numbers $\alpha$ and $\beta$ 
such that $\alpha\perp\beta$, 
the vectors $\alpha+\beta$ and $\alpha-\beta$ are symmetric 
about the vector $\alpha$. 
Thus the radian obtained by rotating the vector $\alpha$ to $\alpha+\beta$ equals the radian obtained by rotating the vector $\alpha-\beta$ to $\alpha$. In other words, we have 
\begin{equation}\label{arg:sym}
\Arg(\alpha+\beta)-\Arg(\alpha)\=\Arg(\alpha)-\Arg(\alpha-\beta)+2k\pi\qquad\text{for some $k\in\Z$}.
\end{equation}

As a specialization, 
taking $\alpha=A(c)$ and $\beta=\sqrt{\Delta(c)}$.
From \cref{thm:cr}, we see that $\alpha\perp\beta$.
Then \cref{arg:sym} implies that 
\[
\Arg\bg{A(c)-\sqrt{\Delta(c)}}\=2\Arg(A(c))-\theta+2k\pi
\qquad\text{for some $k\in\Z$}.
\] 
Thus, from definition of $v$, we have
\begin{align*}
\Arg(v)
&\=\Arg\bg{A(c)+\sqrt{\Delta(c)}\,}-\Arg\bg{A(c)-\sqrt{\Delta(c)}\,}+2k\pi\\
&\=\theta-\bg{2\Arg(A(c))-\theta}+2k'\pi\\
&\=2\bg{\theta-\Arg(A(c))}+2k'\pi
\end{align*}
for some $k,k'\in\Z$.
Now, extracting the argument from both sides of $v^p=1$, 
we obtain that 
\begin{equation}\label{def:q}
p\cdot 2\bg{\theta-\Arg(A(c))}\=q\cdot 2\pi,
\qquad\text{for some $q\in \Z$}.
\end{equation}
It follows that $\theta-\Arg(A(c))=q\pi/p$. 

Dividing both sides of \cref{def:q} 
by the greatest common divisor $(p,q)$ yields
\begin{equation}\label{pq}
\frac{p}{(p,q)}\Bg{\theta-\bg{2\Arg(A(c))-\theta}}\=\frac{q}{(p,q)}\cdotp 2\pi.
\end{equation}
Since $|v|=1$, \cref{pq} implies that
$v^{p/(p,q)}=1$.
From the minimality of $p$, we infer that $(p,q)=1$.
\end{proof}

\section{Real common zeros of polynomials with real coefficients}\label[sec]{sec:real}

When the polynomials $A(z)$, $B(z)$, and $W_n(z)$ are with real coefficients, results in \cref{sec:complex} reduce,
and we can say something more about real common zeros.

\begin{thm}\label{thm:cr:real}
Suppose that the polynomials $A(z)$, $B(z)$, and $W_1(z)$
are with real coefficients.
Let $c\in\R$ such that $B(c)\ne0$.
Then $c$ is a common zero of $\W$ if and only if 
one of the following cases happen:
\begin{itemize}
\itemsep 5pt
\item
$\Delta(c)>0$, $A(c)=0$, and $W_{n}(c)=0$ if and only if $n$ is odd;
\item
$\Delta(c)<0$, and Condition (iii) in \cref{thm:cr} holds.
\end{itemize}  
\end{thm}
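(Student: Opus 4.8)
The plan is to read everything off from \cref{thm:cr} (together with \cref{thm:pr,thm:n}), exploiting that for real $A(z),B(z),W_1(z)$ and real $c$ all of $A(c)$, $B(c)$, $W_1(c)$, and $\Delta(c)=A^2(c)+4B(c)$ are real, and that $\sqrt{\Delta(c)}$ is a positive real when $\Delta(c)>0$ and a positive imaginary number when $\Delta(c)<0$. First I would dispose of the case $\Delta(c)=0$: then $c$ is not a common zero by \cref{lem:<>0}, while neither listed alternative can hold (each demands $\Delta(c)\ne0$), so the equivalence is true trivially. Hence I may assume $\Delta(c)\ne0$, so that the number $v$ is defined. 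I would also note once and for all that Condition (ii) of \cref{thm:cr}, namely $A(c)\prl W_1(c)$, holds automatically in this setting, since $A(c)$ and $W_1(c)$ are real.

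Suppose first that $\Delta(c)>0$. For the forward implication, if $c$ is a common zero then Condition (i) of \cref{thm:cr} gives $A^2(c)=x\cdot B(c)$ with $-4<x\le0$, so that $\Delta(c)=(x+4)B(c)$; since $\Delta(c)>0$ and $x+4>0$, this forces $B(c)>0$, and then $A^2(c)=x B(c)\le0$ forces $A(c)=0$. With $A(c)=0$ one gets $v=\sqrt{\Delta(c)}\big/\bigl(-\sqrt{\Delta(c)}\bigr)=-1$, hence $p=2$ by \cref{thm:pr}, and then \cref{thm:n} yields $r=1$ and the description $W_n(c)=0\iff n\equiv1\pmod{2}$, i.e.\ $W_n(c)=0$ exactly for odd $n$. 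For the converse, if $\Delta(c)>0$, $A(c)=0$, and $W_n(c)=0$ precisely for odd $n$, then $W_1(c)=W_3(c)=0$ with $1\ne3$, so $c$ is a common zero directly from the definition --- no appeal to \cref{thm:cr} is needed here.

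Suppose next that $\Delta(c)<0$. The forward implication is immediate: if $c$ is a common zero, then \cref{thm:cr} supplies Condition (iii). For the converse, assume $\Delta(c)<0$ and Condition (iii). From $A^2(c)\ge0$ and $A^2(c)+4B(c)<0$ I get $B(c)<0$, and then $x:=A^2(c)/B(c)$ satisfies $x\le0$; moreover $x>-4$ is equivalent to $A^2(c)<-4B(c)$, i.e.\ to $\Delta(c)<0$, so Condition (i) of \cref{thm:cr} holds. Condition (ii) holds by the remark above and Condition (iii) is assumed, so \cref{thm:cr} shows $c$ is a common zero. Finally, since a common zero must have $\Delta(c)\ne0$ by \cref{lem:<>0}, and the two cases treated correspond to $\Delta(c)>0$ and $\Delta(c)<0$ respectively, these two alternatives are exhaustive, which finishes the equivalence.

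I expect the only delicate point to be the forward half of the case $\Delta(c)>0$: one must first extract $A(c)=0$ from Condition (i) by tracking the sign of $B(c)$, and then convert $v=-1$ into the precise statement ``$W_n(c)=0$ if and only if $n$ is odd'' via the minimality definitions of $p$ and $r$ in \cref{thm:pr,thm:n}. The rest is bookkeeping: specializing \cref{thm:cr} and reading off the signs of real quantities.
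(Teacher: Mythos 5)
Your proof is correct and follows essentially the same route as the paper: it reduces everything to \cref{thm:cr} together with \cref{lem:<>0,thm:pr,thm:n}, observing that Condition (ii) is automatic for real data and checking Conditions (i) and (iii) exactly as the paper does. The only cosmetic difference is that in the case $\Delta(c)>0$ you extract $A(c)=0$ from Condition (i) by sign analysis, while the paper gets it from $v$ being a real root of unity; both arguments then give $v=-1$, $p=2$, $r=1$ and the ``odd $n$'' description.
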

\begin{proof}
Since $A(z)$ and $B(z)$ are polynomials with real coefficients,
and $c\in\R$, we have $\Delta(c)\in\R$.
We deal with the sufficiency and necessity individually.

\noindent{\bf The ``only if'' part.}
Suppose that $c$ is a common zero.
By \cref{lem:<>0}, we have $\Delta(c)\ne0$.
If $\Delta(c)<0$, then \cref{thm:cr} implies Condition (iii).
Otherwise $\Delta(c)>0$, then $v\in\R$ from the definition of $v$.
Consequently, the equation $v^p=1$ implies that $v\in\{\pm1\}$.
It follows that $A(c)=0$ and thus $v=-1$.
Therefore $p=2$ from definition. 
Since $r\in\{1,2,\ldots,p-1\}$, we find $r=1$.
By \cref{thm:n}, we have $W_n(c)=0$ if and only if $n$ is odd.

\noindent{\bf The ``if'' part.}
If $W_n(c)=0$ for odd $n$, then $c$ is a common zero from definition.
We can suppose that $\Delta(c)<0$ and Condition (iii) holds true.
Then it suffices to verify Conditions (i) and (ii) in \cref{thm:cr}.
In fact, since $\Delta(c)<0$, we have $B(c)<0$ from the definition of $\Delta(z)$.
Since $A^2(c)\ge0$, there exists $x<0$ such that $A^2(c)=xB(c)$.
Thus $0>\Delta(c)=(x+4)B(c)$. Therefore, $x>-4$. This proves (i).
On the other hand, since the polynomial $W_1(z)$ is with real coefficients, we have $W_1(c)\in\R$. 
It follows that $A(c)\prl W_1(c)$, i.e., (ii) holds true.
\end{proof}

When $A(c)\in\R$ and $\Delta(c)<0$, the argument $\theta$ defined by \cref{def:theta} can be defined alternatively as $\theta\in(0,\pi)$ such that 
\begin{equation}\label{def:theta:real}
\tan\theta\=\frac{\sqrt{-\Delta(c)}}{A(c)}.
\end{equation}
In particular, we have $\theta=\pi/2$ if $A(c)=0$.
In this case, the general solution to the number sequence $\{W_n\}$ in \cref{lem:00} can be recast in terms of $\theta$; see \cref{lem:00:real}.

\begin{lem}\label{lem:00:real}
Let $\{W_n\}_{n\ge0}$ be a sequence of real numbers defined 
by $W_0=1$ and $W_n=AW_{n-1}+BW_{n-2}$ for $n\ge 2$,
where $A,B\in\R$.
If $\Delta=A^2+4B<0$, 
then
\[
W_n
\=\bigl|B^{n/2}\bigr|\cdot
\bgg{\cos{n\theta}+{(2W_1-A)\sin{n\theta}\over \sqrt{-\Delta}}},
\qquad\text{for $n\ge 0$}.
\]
\end{lem}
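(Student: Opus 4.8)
The plan is to derive \cref{lem:00:real} directly from the $\Delta\ne0$ case of \cref{lem:00} by a trigonometric rewriting, using that $\Delta<0$ forces the two characteristic roots to be complex conjugates of modulus $\bigl|\sqrt{B}\bigr|$. First I would record the basic data: since $\Delta<0$ we have $\sqrt{\Delta}=i\sqrt{-\Delta}$, so the two roots appearing in \cref{lem:00} are $(A\pm\sqrt{\Delta})/2=(A\pm i\sqrt{-\Delta})/2$, which are complex conjugates. Their common modulus is $\sqrt{(A^2-\Delta)/4}=\sqrt{-B}=\bigl|B^{1/2}\bigr|$, and with $\theta$ defined by \cref{def:theta:real} (equivalently $\theta=\Arg(A+i\sqrt{-\Delta})$, so $\cos\theta=A/(2\sqrt{-B})$ and $\sin\theta=\sqrt{-\Delta}/(2\sqrt{-B})$) we get $(A\pm\sqrt{\Delta})/2=\sqrt{-B}\,e^{\pm i\theta}$, hence $(A\pm\sqrt{\Delta})^n=2^n|B^{n/2}|\,e^{\pm in\theta}$.

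Next I would substitute these polar forms into the second branch of \cref{lem:00}. Writing $g^\pm=(2W_1-A\pm\sqrt{\Delta})/2=(2W_1-A)/2\pm i\sqrt{-\Delta}/2$, the numerator $g^+(A+\sqrt\Delta)^n-g^-(A-\sqrt\Delta)^n$ becomes $2^n|B^{n/2}|\bigl(g^+e^{in\theta}-g^-e^{-in\theta}\bigr)$. Expanding $g^+e^{in\theta}-g^-e^{-in\theta}$ and collecting, the $(2W_1-A)/2$-parts contribute $(2W_1-A)/2\cdot(e^{in\theta}-e^{-in\theta})=i(2W_1-A)\sin n\theta$, while the $\pm i\sqrt{-\Delta}/2$-parts contribute $i\sqrt{-\Delta}/2\cdot(e^{in\theta}+e^{-in\theta})=i\sqrt{-\Delta}\cos n\theta$. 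Dividing by $2^n\sqrt\Delta=2^n i\sqrt{-\Delta}$, the factor $2^n$ cancels, the factor $i$ cancels, and one is left with $|B^{n/2}|\bigl(\cos n\theta+(2W_1-A)\sin n\theta/\sqrt{-\Delta}\bigr)$, which is exactly the claimed formula. I would also note the $n=0$ case is the trivial identity $1=\cos0$, and $n=1$ reduces to $|B^{1/2}|(\cos\theta+(2W_1-A)\sin\theta/\sqrt{-\Delta})=A/2+(2W_1-A)/2=W_1$ by the expressions for $\cos\theta,\sin\theta$, so the formula is consistent with the initial data.

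There is essentially no obstacle here: the only point requiring a little care is keeping the signs straight in $\sqrt{\Delta}=i\sqrt{-\Delta}$ and in the definition of $\theta$ so that $(A+\sqrt\Delta)/2$ really corresponds to $\sqrt{-B}\,e^{+i\theta}$ rather than $e^{-i\theta}$ — this is why one wants $\theta=\Arg(A+\sqrt{\Delta})$ with $\sqrt{\Delta}$ on the upper imaginary axis, consistent with the convention fixed before \cref{lem:00}. Once the polar substitution is set up correctly, the rest is the routine Euler-formula simplification sketched above. Alternatively, and perhaps more cleanly for the write-up, one can simply \emph{verify} the proposed closed form satisfies $W_0=1$ and the recurrence $W_n=AW_{n-1}+BW_{n-2}$ directly: $W_0=1$ is immediate, and plugging the formula into the recurrence reduces, after dividing by $|B^{n/2-1}|$, to the identities $|B|\cos n\theta = A|B^{1/2}|\cos(n-1)\theta+B\cos(n-2)\theta$ and the same with $\cos$ replaced by $\sin$, which follow from $2\sqrt{-B}\cos\theta=A$ together with the angle-addition formulas (equivalently, from the fact that $\sqrt{-B}\,e^{\pm i\theta}$ are roots of $t^2-At-B$). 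By the uniqueness of the sequence determined by $W_0,W_1$ and the recurrence (as in the proof of \cref{lem:00}), this verification completes the proof.
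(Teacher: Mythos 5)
Your proof is correct, and it takes a genuinely different route from the paper. The paper disposes of this lemma essentially by citation: it quotes the formula $W_n=\frac{R^n}{2^n}\bigl(\cos n\theta+\frac{(2W_1-A)\sin n\theta}{\sqrt{-\Delta}}\bigr)$ with $R=|A+\sqrt{\Delta}|$ from \cite{GMTW16-01,GMTW16-10} (stated there for $A\ne0$), then merely verifies $R=2\sqrt{|B|}$ and checks the $A=0$ case separately. You instead derive the formula from scratch out of \cref{lem:00}: writing $\sqrt{\Delta}=i\sqrt{-\Delta}$, putting the characteristic roots in polar form $(A\pm\sqrt{\Delta})/2=\sqrt{-B}\,e^{\pm i\theta}$ with $\theta=\Arg(A+i\sqrt{-\Delta})$ (so $\cos\theta=A/(2\sqrt{-B})$, $\sin\theta=\sqrt{-\Delta}/(2\sqrt{-B})$), and simplifying via Euler's formula; your bookkeeping of $g^\pm$ and the cancellation of $2^n$ and $i$ is accurate, and your fallback of verifying the closed form against the recurrence (plus $W_0$, $W_1$) is also sound by the uniqueness argument already used for \cref{lem:00}. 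What your approach buys is self-containment and uniformity: it needs no external reference, it treats $A=0$ and $A\ne0$ together (the $\Arg$ description of $\theta$ sidesteps the degenerate $\tan\theta$ formula in \cref{def:theta:real}), and it also covers $n=0$ explicitly, whereas the paper's statement is cheaper to write but leans on results proved elsewhere. Your care with the sign convention $\sqrt{\Delta}=i\sqrt{-\Delta}$ (so that $A+\sqrt{\Delta}$ corresponds to $e^{+i\theta}$) is exactly the point where a sloppy version would go wrong, and you handled it consistently with the branch convention fixed before \cref{lem:00}.
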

\begin{proof}
It can be found in \cite{GMTW16-01,GMTW16-10} that 
\[
W_n
\=\frac{R^n}{2^n}
\bgg{\cos{n\theta}+{(2W_1-A)\sin{n\theta}\over \sqrt{-\Delta}}},
\]
if $A\ne0$ and $|A+\sqrt{\Delta}|=R$.
It is direct to verify that $R=2\sqrt{|B|}$ and that the above expression is  true if $A=0$. This completes the proof.
\end{proof}

\Cref{lem:00:real} gives more information about the argument.

\begin{thm}\label{thm:tan:rtheta}
Suppose that the polynomials $A(z)$, $B(z)$, and $W_1(z)$
are with real coefficients.
Let $c\in\R$ be a common zero of $\W$ such that $A(c)B(c)\ne0$.
If $A(c)-2W_1(c)\ne0$, then
\begin{equation}\label{thm:tan:rtheta}
\tan(r\theta)=\frac{\sqrt{-\Delta(c)}}{A(c)-2W_1(c)},
\end{equation}
where $r$ is defined by \cref{thm:pr}, 
and $\theta\in(0,\pi)$ is defined by \cref{def:theta:real}.
\end{thm}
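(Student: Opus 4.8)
The plan is to use \cref{lem:00:real} to express the condition $W_r(c)=0$ directly in terms of $\theta$. Since $c$ is a common zero with $B(c)\ne0$, \cref{thm:n} gives that $W_r(c)=0$, and since $A(c)\ne0$ and $\Delta(c)<0$ (the latter because $A(c)B(c)\ne 0$ forces $\Delta(c)\ne 0$ by \cref{thm:cr:real}, and if $\Delta(c)>0$ then $A(c)=0$, contrary to hypothesis), \cref{def:theta:real} defines $\theta\in(0,\pi)$. Applying \cref{lem:00:real} with $n=r$, we have
\[
0\=W_r(c)\=\bigl|B^{r/2}(c)\bigr|\cdot\bgg{\cos(r\theta)+\frac{(2W_1(c)-A(c))\sin(r\theta)}{\sqrt{-\Delta(c)}}}.
\]
Since $B(c)\ne0$, the bracketed factor must vanish, giving
\[
\cos(r\theta)\=-\frac{(2W_1(c)-A(c))\sin(r\theta)}{\sqrt{-\Delta(c)}}\=\frac{(A(c)-2W_1(c))\sin(r\theta)}{\sqrt{-\Delta(c)}}.
\]

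Next I would divide through by $\cos(r\theta)$ to isolate $\tan(r\theta)$; this requires verifying $\cos(r\theta)\ne0$. If $\cos(r\theta)=0$, then the displayed identity forces $(A(c)-2W_1(c))\sin(r\theta)=0$; since $\sin(r\theta)\ne 0$ whenever $\cos(r\theta)=0$, this would give $A(c)-2W_1(c)=0$, contradicting the hypothesis $A(c)-2W_1(c)\ne0$. Hence $\cos(r\theta)\ne0$, and dividing yields
\[
\tan(r\theta)\=\frac{\sqrt{-\Delta(c)}}{A(c)-2W_1(c)},
\]
which is exactly \cref{thm:tan:rtheta}. It is worth a sentence to note that $\sqrt{-\Delta(c)}\ne 0$ here, so the right-hand side is a genuine nonzero real number, consistent with $\cos(r\theta)\ne 0$.

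I do not anticipate a serious obstacle; the argument is essentially a direct substitution into \cref{lem:00:real} followed by algebraic rearrangement. The only point requiring care is the justification that $\cos(r\theta)\ne0$, which is where the hypothesis $A(c)-2W_1(c)\ne0$ gets used, and making sure all the quantities ($\Delta(c)$, $\cos(r\theta)$, the denominator) are nonzero so that the division steps are legitimate. One should also briefly confirm at the outset that the hypotheses of \cref{thm:tan:rtheta} place us squarely in the case $\Delta(c)<0$ of \cref{thm:cr:real}, so that $\theta$ is well-defined by \cref{def:theta:real} and \cref{lem:00:real} applies.
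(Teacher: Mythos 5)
Your proposal is correct and follows essentially the same route as the paper: establish $\Delta(c)<0$ via \cref{thm:cr:real}, get $W_r(c)=0$ from \cref{thm:n}, substitute into \cref{lem:00:real}, and rearrange using $A(c)-2W_1(c)\ne0$. Your extra care in verifying $\cos(r\theta)\ne0$ and the nonvanishing of the various quantities simply fills in details the paper leaves implicit.
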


\begin{proof}
By \cref{thm:cr:real}, we have $\Delta(c)<0$.
By \cref{thm:n}, we have $W_{r}(c)=0$.
From \cref{lem:00:real}, we obtain that 
\[
\cos(r\theta)+{\bg{2W_1(c)-A(c)}\sin(r\theta)\over \sqrt{-\Delta(c)}}=0.
\]
Since $2W_1(c)-A(c)\ne0$, the above equation implies  \cref{thm:tan:rtheta} immediately. 
\end{proof}

\begin{cor}\label{cor:cr:A=ax+b}
Let $\{W_n(z)\}_{n\ge 0}$ be a normalized recursive polynomial sequence of order two defined by \cref{rec:order2} with $W_1(z)=z$,
where $A(z)=az+b$ with $a,b\in\R$,
and $B(z)$ is a polynomial with real coefficients such that $B(0)\ne0$.
Suppose that $c\in\R$ is a common zero such that $A(c)\ne0$.
Then 
\[
c\=\frac{b\bg{\tan\theta-\tan(r\theta)}}
{(a-2)\tan(r\theta)-a\tan\theta},
\]
where $r$ is defined by \cref{thm:pr}, 
and $\theta\in(0,\pi)$ is defined by \cref{def:theta:real}.
\end{cor}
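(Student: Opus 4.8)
The plan is to combine \cref{thm:tan:rtheta} with the explicit form $A(z)=az+b$ to solve for $c$. Since $A(c)B(c)\ne0$ by hypothesis together with $B(0)\ne0$ (which forces $B(c)\ne0$ at a common zero $c$ by \cref{lem:AB=0}, though here we are directly told $A(c)\ne0$ and $B(0)\ne0$), and since $W_1(c)=c$, the quantity $A(c)-2W_1(c)=ac+b-2c=(a-2)c+b$. I would first check that this quantity is nonzero so that \cref{thm:tan:rtheta} applies: if $(a-2)c+b=0$, then \cref{lem:00:real} with $W_r(c)=0$ would give $\cos(r\theta)=0$, whereas $2W_1(c)-A(c)=0$ and $\cos(r\theta)+0=0$ is consistent — so this degenerate subcase needs a separate remark or should be excluded; I expect the cleanest route is to simply observe that when $(a-2)c+b\neq0$ the formula of \cref{thm:tan:rtheta} reads
\[
\tan(r\theta)\=\frac{\sqrt{-\Delta(c)}}{(a-2)c+b}.
\]

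Next I would invoke \cref{def:theta:real}, which gives $\tan\theta=\sqrt{-\Delta(c)}/A(c)=\sqrt{-\Delta(c)}/(ac+b)$. Dividing the two relations eliminates the awkward radical $\sqrt{-\Delta(c)}$:
\[
\frac{\tan(r\theta)}{\tan\theta}\=\frac{ac+b}{(a-2)c+b}.
\]
From here it is pure algebra: cross-multiplying yields $\tan(r\theta)\cdot\bigl((a-2)c+b\bigr)=\tan\theta\cdot(ac+b)$, hence
\[
c\bigl((a-2)\tan(r\theta)-a\tan\theta\bigr)\=b\bigl(\tan\theta-\tan(r\theta)\bigr),
\]
and solving for $c$ gives exactly the claimed expression, provided the coefficient $(a-2)\tan(r\theta)-a\tan\theta$ is nonzero.

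The only genuine obstacle is justifying that the denominator $(a-2)\tan(r\theta)-a\tan\theta$ does not vanish, so that the division producing $c$ is legitimate. I would argue this by contradiction: if it were zero, then the relation $c\bigl((a-2)\tan(r\theta)-a\tan\theta\bigr)=b\bigl(\tan\theta-\tan(r\theta)\bigr)$ forces $b(\tan\theta-\tan(r\theta))=0$. If $b=0$ then $A(z)=az$, so $A(c)\ne0$ gives $a\ne0$ and $c\ne0$, and vanishing of the denominator becomes $(a-2)\tan(r\theta)=a\tan\theta$; meanwhile the ratio identity reads $\tan(r\theta)/\tan\theta=ac/((a-2)c)=a/(a-2)$, i.e. $(a-2)\tan(r\theta)=a\tan\theta$ — consistent, so this case must be argued away using $\Delta(c)<0$ and minimality of $r$ (from \cref{thm:pr,thm:n}), essentially noting $r\theta\not\equiv\theta$ modulo the relevant period. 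If instead $\tan\theta=\tan(r\theta)$, then since $\theta\in(0,\pi)$ and $r\ge1$, this pins down $r\theta=\theta+k\pi$; combined with \cref{def:theta:real} and the definition of $v$ (whose argument is $2(\theta-\Arg A(c))$, see \cref{cor:arg}), one shows $v^{r-1}=\pm1$ leading via $v^p=1$ and the minimality in \cref{thm:pr} to $r\equiv\pm1$ in a way that contradicts $W_{r-1}(c)$ or $W_{r+1}(c)$ being forced to vanish unless $p\mid 2$, a case handled by \cref{thm:cr:real}. I would streamline all of this into a single short paragraph: assuming the denominator vanishes and tracing back through \cref{thm:tan:rtheta} and \cref{lem:00:real} shows $W_1(c)$ would have to make $A(c)-2W_1(c)$ and $A(c)$ collinear in a way incompatible with $c$ being a \emph{common} zero with $A(c)\ne0$, i.e. with $r$ being the genuine first index in $\{1,\dots,p-1\}$. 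This bookkeeping with the period $p$ is where the real care is needed; the displayed formula itself falls out immediately once division is permitted.
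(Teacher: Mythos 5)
Your main line is exactly the paper's: show $B(c)\ne0$, apply \cref{thm:tan:rtheta}, divide by \cref{def:theta:real} to eliminate $\sqrt{-\Delta(c)}$, and solve the resulting linear equation for $c$. The one concrete gap in that main line is your claim that $B(c)\ne0$ is ``forced by \cref{lem:AB=0}'': that lemma only says that a common zero with $B(c)=0$ annihilates all $W_n$ with $n\ge2$; by itself it contradicts nothing and does not use $A(c)\ne0$ or $B(0)\ne0$. The paper closes this with the computation $W_2(z)=A(z)z+B(z)$: if $B(c)=0$, then \cref{lem:AB=0} gives $0=W_2(c)=A(c)c$, so $c=0$ because $A(c)\ne0$, and then $B(c)=B(0)\ne0$, a contradiction. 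Without $B(c)\ne0$ you cannot invoke \cref{thm:pr} to define $r$, nor \cref{thm:tan:rtheta} at all, so this step needs to be spelled out.

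Concerning the degeneracies you flag: you are right that the paper's proof is silent about them, but your plan to ``argue them away'' via minimality of $r$ and period bookkeeping cannot succeed, because both cases genuinely occur under the stated hypotheses. For $b=0$, take $A(z)=z$, $B(z)=-1$, $W_1(z)=z$: then $c=1$ is a common zero ($W_2(1)=W_5(1)=0$) with $A(1)\ne0$, $B(0)\ne0$, $\theta=\pi/3$, $r=2$, and both the numerator and the denominator of the asserted formula vanish, so the cross-multiplied identity $c\bigl((a-2)\tan(r\theta)-a\tan\theta\bigr)=b\bigl(\tan\theta-\tan(r\theta)\bigr)$ carries no information about $c$. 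Likewise $A(c)-2W_1(c)=0$ really happens --- e.g.\ in the paper's own \cref{eg:cr:r=2} with $\theta=\pi/4$, where $c=b/(2-a)$ and $r=2$ --- and there \cref{lem:00:real} forces $\cos(r\theta)=0$, so $\tan(r\theta)$ is not even defined. These cases are limitations of the statement itself and must be excluded by hypothesis (as the paper's examples implicitly do with $b\ne0$), not refuted. Once they are excluded, the nonvanishing of the denominator is immediate and needs none of your $v^{r-1}$, $p\mid 2$ analysis: if $(a-2)\tan(r\theta)=a\tan\theta$, then your identity together with $b\ne0$ gives $\tan(r\theta)=\tan\theta$, so the denominator equals $-2\tan\theta$, which is nonzero because $\Delta(c)<0$ and $A(c)\ne0$ make $\tan\theta=\sqrt{-\Delta(c)}/A(c)$ finite and nonzero.
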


\begin{proof}
From the premise, we have $A(c)=ac+b\ne0$.
Suppose to the contrary that $B(c)=0$.
By \cref{lem:AB=0}, we have $W_2(c)=0$.
By the recurrence relation, it is routine to calculate that
$W_2(z)=A(z)z+B(z)$.
It follows that $0=W_2(c)=A(c)c$.
Since $A(c)\ne0$, we infer that $c=0$, and therefore $B(c)=B(0)\ne0$, a contradiction!
This proves $B(c)\ne0$.
Now we can use \cref{thm:cr:real}.
Note that $W_1(c)=c$.
Dividing \cref{thm:tan:rtheta} by \cref{def:theta:real},
we can obtain the desired expression of $c$ by solving it out.
This completes the proof.
\end{proof}	

\begin{eg}\label{eg:cr:r=2}
Let $\theta\in\Q\pi\cap(0,\,\pi/2)$.
Let $a,b\in\R$ such that $a\ne4\cos^2\theta$ and $b\ne0$.
Suppose that $W_0(z)=1$, $W_1(z)=z$, and 
\begin{equation}\label{eg:r=2}
W_n(z)\=(az+b)W_{n-1}(z)-\frac{4b^2\cos^2\theta}{(4\cos^2\theta-a)^2}W_{n-2}(z),
\end{equation}
for $n\ge 2$. By \cref{cor:cr:A=ax+b}, we have 
\[
W_n\bgg{\frac{b}{4\cos^2\theta-a}}=0
\eqrl n\equiv2\pmod{p^*(\theta)}.
\]
\end{eg}

Particular cases of \cref{eg:cr:r=2} include the followings.

\begin{itemize}
\itemsep 5pt
\item{\bf Case $p^*(\theta)=3$.}
If $a\ne1$, and if \cref{eg:r=2} is 
\[
W_n(z)=(az+b)W_{n-1}(z)-\frac{b^2}{(1-a)^2}W_{n-2}(z),
\]
then $W_n(b/(1-a))=0$ if and only of $n\equiv 2\pmod{3}$.
\item{\bf Case $p^*(\theta)=4$.}
If $a\ne2$, and if \cref{eg:r=2} is 
\[
W_n(z)=(az+b)W_{n-1}(z)-\frac{2b^2}{(2-a)^2}W_{n-2}(z),
\]
then $W_n(b/(2-a))=0$ if and only if $n\equiv 2\pmod{4}$.
\item{\bf Case $p^*(\theta)=6$.}
If $a\ne3$, and if \cref{eg:r=2} is 
\[
W_n(z)=(az+b)W_{n-1}(z)-\frac{3b^2}{(3-a)^2}W_{n-2}(z),
\]
then $W_n(b/(3-a))=0$ if and only if $n\equiv 2\pmod{6}$.
\end{itemize}


\begin{thebibliography}{99}

\bibitem{ARR99B}
G.E. Andrews, A. Richard, and R. Ranjan,
Special Functions, Camb. Univ. Press, 1999.

\bibitem{Bat67B}
P.M. Batchelder, An Introduction to Linear Difference Equations, 
Dover, New York, 1967.

\bibitem{BK78}
T.C. Benton and H.D. Knoble, 
Common zeros of two Bessel functions, 
Math. Comp. 32 (1978), 533--535. 

\bibitem{BKW75}
S. Beraha, J. Kahane, and N. J. Weiss,
Limits of zeroes of recursively defined polynomials,
Proc. Natl. Acad. Sci. 72(11) (1975), 4209.

\bibitem{BB09}
J. Borcea and P. Br\"anden, 
The Lee-Yang and Polya-Schur Programs. II. Theory of Stable Polynomials and Applications, 
Comm. Pure Appl. Math. 62(12) (2009), 1595--1631.

\bibitem{BG07}
R. Boyer and W.M.Y. Goh,
On the zero attractor of the Euler polynomials,
Adv. in Appl. Math. 38(1) (2007), 97--132.

\bibitem{Die85B}
J. Dieudonn\'e, History of Algebraic Geometry, Wadsworth, Monterey, CA 1985. 

\bibitem{GMTW16-01}
J.L. Gross, T. Mansour, T.W. Tucker, and D.G.L. Wang,
Root geometry of polynomial sequences I: Type $(0,1)$,
J. Math. Anal. Appl. 433(2) (2016), 1261--1289.

\bibitem{GMTW16-10}
---,
Root geometry of polynomial sequences II: type $(1,0)$, 
J. Math. Anal. Appl. 441(2) (2016), 499--528.

\bibitem{KN81}
M.G. Krein and M.A. Naimark, 
The method of symmetric and Hermitian forms in theory of separation of the roots of algebraic equations
(translated from Russian, originally published in Kharkov 1936),
Linear Multilinear Algebra 10 (1981), 265--308.

\bibitem{Lac84}
N.H.J. Lacroix, On common zeros of Legendre's associated functions, 
Math. Comp. 43 (1984), 243--245. 

\bibitem{LR96}
L. Lerer and L. Rodman, 
Common zero structure of rational matrix functions, 
J. Funct. Anal. 136 (1996), 1--38. 

\bibitem{Li12}
B.Q. Li, On common zeros of $L$-functions, 
Math. Z. 272 (2012), 1097--1102. 

\bibitem{Pem13}
R. Pemantle,
Zeros of polynomials and their importance in combinatorics and probability,
Invited address for Joint Math. Meeting in Jan. 2013, San Diego, CA.

\bibitem{RS02B}
Q.I. Rahman, G. Schmeisser, 
Analytic Theory of Polynomials,  
London Math. Soc. Monogr. New Ser. 26.,
The Clarendon Press, Oxford University Press, Oxford, 2002.

\bibitem{Rota85}
A dialogue with Gian-Carlo Rota and David Sharp,
Mathematics, Philosophy, and Artificial Intelligence,
Los Alamos Sci., 12, 1985.

\bibitem{Sta00}
R.P. Stanley, 
Positivity problems and conjectures in algebraic combinatorics,
in V. Arnold, M. Atiyah, P. Lax, and B. Mazur (Eds.), Mathematics: frontiers and perspectives, Providence: Amer. Math. Soc., 2000, pp. 295--319.

\bibitem{Xu94}
Y. Xu, Block Jacobi matrices and zeros of multivariate orthogonal polynomials, 
Trans. Amer. Math. Soc. 342 (1994), 855--866.

\end{thebibliography}
\end{document}